 \newtheorem{thm}{Theorem}[section]
 \newtheorem{lem}[thm]{Lemma}
 \theoremstyle{definition}
 \theoremstyle{remark}
 \numberwithin{equation}{section}
\begin{document}

\title[A note on the automorphism group of the Hamming graph]
 {A note on the automorphism group of \\the Hamming graph}

\author{S.Morteza Mirafzal and Meysam Ziaee}

\address{Department of Mathematics\\ Lorestan University\\ Khoramabad\\ Iran}

\email{smortezamirafzal@yahoo.com}
\email{mirafzal.m@lu.ac.ir}
\email{masimeysam@gmail.com}
\thanks{}
\subjclass{Primary 05C25  Secondary 94C15}

\keywords{Hamming graph,  automorphism group,  wreath product}

\date{}

\begin{abstract}
Let $\Omega$  be a $m$-set,  where $m>1$, is an integer. The  Hamming   graph $H(n,m)$, has  $\Omega ^{n}$ as its vertex-set,  with two vertices are adjacent if and only if    they differ in exactly one coordinate.    In this paper, we provide a proof on  the automorphism group of the Hamming graph $H(n,m)$, by using elementary facts of group theory and graph theory.
\end{abstract}

\maketitle


\section{Introduction}
\noindent

Let $\Omega$  be a $m$-set,  where $m>1$, is an integer. The $Hamming  \ graph$ $H(n,m)$, has  $\Omega ^{n}$ as its vertex-set,  with two vertices are adjacent if and only if    they differ in exactly one coordinate.
This graph is very famous and much
is known about it, for instance this graph is actually the Cartesian product of $n$  complete graphs
$K_m$, that is,  $K_m \Box \cdots \Box K_m$.
 In general,
the connection between Hamming graphs and coding theory is of major importance.
If $m=2$, then $H(n,m)=Q_n$, where $Q_n$ is the hypercube of dimension $n$. Since,  the automorphism group of the hypercube $Q_n$ has been already determined [10], in the sequel, we assume that $m \geq 3$. \
Figure 1. displays $H(2,3)$ in the plane.  Note that in this figure, we denote the vertex $(x,y)$ by $xy$.\

It follows from the definition of the Hamming graph $ H(n,m) $ that if $\theta \in \mbox{Sym}([n]$), where $ \Omega=[n]=\{1,\cdots,  n  \} $,   then
$$ f_\theta : V( H(n,m) )\longrightarrow V( H(n,m)),
 f_\theta (x_1, ..., x_n ) =   (x_{\theta (1)}, ..., x_{\theta (n)} ),  $$
 is an automorphism of the Hamming graph $ H(n,m),  $ and the mapping  \
  $ \psi : \mbox{Sym} ([n]) \longrightarrow Aut ( H(n,m) )$, defined by
 this  rule,   $ \psi ( \theta ) = f_\theta $,  is an injection. Therefore, the set  $H= \{ f_\theta \ |\  \theta \in \mbox{Sym}([n]) \} $,  is a subgroup of $ Aut (H(n,m)) $,  which is isomorphic with $\mbox{Sym}([n])$.  Hence,  we have  $\mbox{Sym}([n]) \leq Aut ( H(n,m) )$.\

\definecolor{qqqqff}{rgb}{0.,0.,1.}
\begin{tikzpicture}[line cap=round,line join=round,>=triangle 45,x=.65cm,y=.8cm]
\clip(-4.3,-2.44) rectangle (11.32,6.3);
\draw (0.64,4.)-- (2.92,3.22);
\draw (2.92,3.22)-- (5.38,3.96);
\draw (5.38,3.96)-- (6.26,2.34);
\draw (6.26,2.34)-- (5.54,-0.14);
\draw (5.54,-0.14)-- (3.02,0.62);
\draw (3.02,0.62)-- (0.76,-0.28);
\draw (0.76,-0.28)-- (-0.28,2.18);
\draw (-0.28,2.18)-- (0.64,4.);
\draw (0.64,4.)-- (5.38,3.96);
\draw (5.38,3.96)-- (5.54,-0.14);
\draw (5.54,-0.14)-- (0.76,-0.28);
\draw (0.76,-0.28)-- (0.64,4.);
\draw (2.92,3.22)-- (2.4,1.64);
\draw (2.4,1.64)-- (3.02,0.62);
\draw (-0.28,2.18)-- (6.26,2.34);
\draw (2.92,3.22)-- (3.02,0.62);
\draw (-0.28,2.18)-- (2.4,1.64);
\draw (2.4,1.64)-- (6.26,2.34);
\draw (0.66,-0.38) node[anchor=north west] {00};
\draw (3.26,1.12) node[anchor=north west] {01};
\draw (5.64,-0.24) node[anchor=north west] {02};
\draw (-1.1,2.36) node[anchor=north west] {10};
\draw (1.98,1.4) node[anchor=north west] {11};
\draw (6.4,2.84) node[anchor=north west] {12};
\draw (-0.22,4.32) node[anchor=north west] {20};
\draw (2.48,3.98) node[anchor=north west] {21};
\draw (5.68,4.24) node[anchor=north west] {22};
\draw (-1.1,-1.12) node[anchor=north west] {Figure 1. The Hamming graph H(2,3)};
\begin{scriptsize}
\draw [fill=qqqqff] (0.64,4.) circle (1.5pt);
\draw [fill=qqqqff] (5.38,3.96) circle (1.5pt);
\draw [fill=qqqqff] (5.54,-0.14) circle (1.5pt);
\draw [fill=qqqqff] (0.76,-0.28) circle (1.5pt);
\draw [fill=qqqqff] (-0.28,2.18) circle (1.5pt);
\draw [fill=qqqqff] (6.26,2.34) circle (1.5pt);
\draw [fill=qqqqff] (2.92,3.22) circle (1.5pt);
\draw [fill=qqqqff] (3.02,0.62) circle (1.5pt);
\draw [fill=qqqqff] (2.4,1.64) circle (1.5pt);
\end{scriptsize}
\end{tikzpicture}

Let $A,B$, be non-empty sets. Let $Fun(A,B)$, be the set  of functions from $A$ to $B$, in other words, $Fun(A,B)= \{f \ | \ f: A \rightarrow B \}$.  If $B$ is a group, then we can turn  $Fun(A,B)$ into a group by defining a product,
$(fg)(a)=f(a)g(a), \ \  f, g\in Fun(A,B), \ \   a\in A,$
where the product on the right of the equation  is in $B$.
  If $f \in Fun([n], \mbox{Sym}([m]))$, then we define the mapping,  $$ A_f: V(\Gamma)\rightarrow V(\Gamma), \ \mbox{ by  this   rule},  \  A_f(x_1, \cdots ,x_n)=(f(1)(x_1), \cdots,  f(n)(x_n)).  $$ It is easy to show that the mapping
 $A_f$ is an automorphism of the Hamming graph $\Gamma =H(n,m)$, and hence the group,  $F=\{ A_f \ |  \ f\in  Fun([n], \mbox{Sym}([m])) \}$, is also a  subgroup
 of the Hamming graph $\Gamma =H(n,m)$.  Therefore, the subgroup which is  generated by $H$ and $F$  in the group $Aut(\Gamma)$, namely, $W=<H,F> $   is a subgroup of $Aut(\Gamma)$. In this paper,  we want to show that;  $$Aut(H(n,m))=W=<H,F>=\mbox{Sym}(\Omega) wr_I \mbox{Sym}([n])  $$\
There are various important families of graphs $\Gamma$,  in which we know that for a particular group $G$, we have
$G \leq Aut(\Gamma)$, but  showing  that in fact we have  $G = Aut(\Gamma)$, is a difficult task. For example note to the following cases. \newline

(1) \  The $Boolean\  lattice$  $BL_n, n \geq 1$, is the graph whose vertex set is the set of all subsets of $[n]= \{ 1,2,...,n \}$, where two subsets $x$ and $y$ are adjacent if and only if their symmetric difference has precisely one element. The $hypercube$  $Q_n$ is the graph whose vertex set is $ \{0,1  \}^n $, where two $n$-tuples  are adjacent if  they differ in precisely one coordinates. It is an easy task to show that $Q_n \cong BL_n $, and $ Q_n \cong Cay(\mathbb{Z}_{2}^n, S )$, where $\mathbb{Z}_{2}$ is
the cyclic group of order 2, and $S=\{ e_i \  | \  1\leq i \leq n \}, $ where  $e_i = (0, ..., 0, 1, 0, ..., 0)$,  with 1 at the $i$th position. It is an easy task to show that the set  $H= \{ f_\theta \ |\  \theta \in \mbox{Sym}([n]) \} $, $ f_\theta (\{x_1, ..., x_n \}) = \{ \theta (x_1), ..., \theta (x_n) \}$ is a subgroup of $Aut(BL_n)$, and hence $H$ is a subgroup of the group $Aut(Q_n)$.  We know that in every Cayley graph $\Gamma= Cay(G,S)$, the group $Aut(\Gamma)$ contains a subgroup isomorphic with the group $G$.  Therefore,  $\mathbb{Z}_{2}^n $ is a subgroup of $Aut(Q_n)$. Now,   showing that $Aut(Q_n) = <\mathbb{Z}_{2}^n, \mbox{Sym}([n])>( \cong \mathbb{Z}_{2}^n \rtimes \mbox{Sym}([n]))$, is  not an easy task [10]. \newline

(2) \  Let  $n,k \in \mathbb{ N}$ with $ k < \frac{n}{2}  $ and Let $[n]=\{1,...,n\}$.   The $Kneser\  graph$  $K(n,k)$ is defined as the graph whose vertex set is $V=\{v\mid v\subseteq [n], |v|=k\}$ and two vertices $v$ and $w$ are adjacent if and only if $|v\cap w|$=0.  The   Kneser graph $K(n,k)$  is  a vertex-transitive graph [6].  It is an easy task to show that the set  $H= \{ f_\theta \  |\  \theta \in \mbox{Sym}([n]) \} $,  $ f_\theta (\{x_1, ..., x_k \}) = \{ \theta (x_1), ..., \theta (x_k) \}$,  is a subgroup of  $ Aut ( K(n,k) )$ [6].   But,  showing  that
 $$ H= \{ f_\theta \ | \   \theta \in \mbox{Sym}([n]) \}= Aut ( K(n,k) )$$
  is not very easy [6 chapter 7, 13]. \newline

(3)\  Let  $n,k \in \mathbb{ N}$ with $ k < n,   $ and let $[n]=\{1,...,n\}$. The $Johnson\  graph$ $J(n,k)$ is defined as the graph whose vertex set is $V=\{v\mid v\subseteq [n], |v|=k\}$ and two vertices $v$ and $w  $ are adjacent if and only if $|v\cap w|=k-1$.   The Johnson\ graph $J(n,k)$ is a  vertex-transitive graph  [6].   It is an easy task to show that the set  $H= \{ f_\theta \ | \  \theta \in \mbox{Sym}([n]) \} $,  $f_\theta (\{x_1, ..., x_k \}) = \{ \theta (x_1), ..., \theta (x_k) \} $,    is a subgroup of $ Aut( J(n,k) ) $ [6].   It has been shown that   $Aut(J(n,k)) \cong \mbox{Sym}([n])$, if  $ n\neq 2k, $  and $Aut(J(n,k)) \cong \mbox{Sym}([n]) \times \mathbb{Z}_2$, if $ n=2k$,   where $\mathbb{Z}_2$ is the cyclic group of order 2 [3,7,12].
\section{Preliminaries}
In this paper, a graph $\Gamma=\Gamma(V,E) $ is considered as a simple undirected graph with vertex-set $V(\Gamma)=V $, and edge-set $ E(\Gamma)=E $.
For all
the terminology and notation not defined here, we follow [1,2,5,6]. \

The
group of all permutations of a set $V$ is denoted by $\mbox{Sym}(V)$  or
just $\mbox{Sym}(n)$ when $|V| =n $. A $permutation\  group$ $G$ on
$V$ is a subgroup of $\mbox{Sym}(V)$. In this case we say that $G$ act
on $V$. If $\Gamma$ is a graph with vertex set $V$, then we can view
each automorphism as a permutation of $V$, and so $Aut(\Gamma)$ is a
permutation group. Let $G$ act  on $V$, we say that $G$ is
$transitive$  (or $G$ acts $transitively$  on $V$), if there is just
one orbit. This means that given any two elements $u$ and $v$ of
$V$, there is an element $ \beta $ of  $G$ such that  $\beta (u)= v.
$

Let $\Gamma, \Lambda $ be  arbitrary graphs with vertex-set $V_{1},V_{2}$, respectively.   An isomorphism from  $\Gamma $ to $\Lambda $ is a bijection $\psi:V_{1}\longrightarrow V_{2}$ such that $\{x,y\}$ is an edge in $\Gamma $ if and only if  $\{\psi(x),\psi(y)\}$ is an edge in $\Lambda$. An isomorphism from a graph $\Gamma $ to itself is called an automorphism of the  graph $\Gamma$. The set of automorphisms of graph $\Gamma $ with the operation of composition of functions is a group, called the automorphism group of $\Gamma $ and denoted by $\mbox{Aut}(\Gamma)$. In most situations, it is difficult to determine the automorphism group of a graph, but there are various in the literature and some of the recent works appear  in the references [7,8,9,11,13,14,15,16,17].\

The graph $\Gamma $ is called $vertex$-$transitive,$  if $\mbox{Aut}(\Gamma) $ acts transitively on $V(\Gamma)$. In other words,  given any vertices $u,v $ of $\Gamma$, there is an $f\in \mbox{Aut}(\Gamma)$ such that $f(u)=v$.\

For $v\in V (\Gamma )$ and $G = Aut(\Gamma )$, the $stabilizer\  subgroup$  $G_{v}$ is the subgroup of $G$ containing of all automorphisms which fix $v$. In the vertex-transitive case all stabilizer subgroups $G_{v}$ are conjugate in $G$, and consequently isomorphic, in this case, the index of $G_{v}$ in $G$ is given by the equation,  $|G : G_{v}| = |G||G_{v}| = |V (\Gamma)|$. If each  stabilizer $G_{v}$ is the identity group, then every element of $G$, except the identity, does not fix any vertex  and we say that $G$ act semiregularly on $V$. We say that $G$ act regularly on $V$ if and only if
$G$ acts transitively and semiregularly on $V,$  and in this case we have $|V | = |G|. $ \

Let $N$ and $H$ be groups,  and let $  \phi: H  \rightarrow Aut(N)$ be a group homomorphism. In other words, the group $H$ acts on the group $N$, by this rule $n^h = \phi (h)(n)$, $n \in N, h \in H$. Note that in this case   we have $ {(n_1 n_2)}^h={n_1}^h {n_2}^h$, $n_1,n_2 \in N$.  The $semidirect \  product$  $N$ by $H$ which is  denoted by $N \rtimes H$ is a group on the set $N \times H$= $\{ (n,h) \ |  \  n\in N, h\in H  \}$, with the multiplication $(n,h)(n_1,h_1)=(n {(n_1)}^{-h}, hh_1) $. Note that the identity element of the group $ N \rtimes H$ is $(1_N, 1_H)$, and the inverse of the element $(n,h)$ is the element $ ({(n^{-1})}^{h}, {h}^{-1}) $.

\section{Main Results}
Let $\Gamma $ be a connected graph with diameter $d$. Then we can partition the vertex-set  $V(\Gamma) $ with respect to the distances of vertices from a fixed vertex. Let $v$ be a fixed vertex of the graph $\Gamma $. We denote the set of vertices at distance $i$ from $v$, by $\Gamma _{i}(v)$. Thus it is obvious that $\{v\}=\Gamma _{0}(v)$ and $ \Gamma_{1}(v)  =  N(v)   $, the set of adjacent vertices to vertex $v$, and $V (\Gamma )$ is partitioned into the disjoint subsets $\Gamma_{0}(x), ..., \Gamma_{D}(x)$. If $\Gamma =H(n,m)$,  then it is clear that two vertices are at distance $k$ if and only if they differ in exactly $k$ coordinates. Then the maximum distance occurs when the two vertices
(regarded as ordered $n$-tuples) differ in all $n$ coordinates. Thus the diameter of $H(n,m)$ is equal to $n$.\\

\begin{lem}
Let $m \geq 3$ and   $\Gamma =H(n,m)$. Let $x\in V(\Gamma )$, $\Gamma_{i}=\Gamma_{i}(x)$ and $v\in \Gamma_{i}$. Then we have;
\begin{center}
$\displaystyle \bigcap _{w\in \Gamma_{i-1}\cap N(v)}(N(w)\cap \Gamma _{i})=\{v\} $.
\end{center}
\end{lem}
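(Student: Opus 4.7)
The plan is to coordinatise so that $v$ differs from $x$ precisely on a subset $I \subseteq [n]$ with $|I| = i$ (implicitly $i \geq 2$, since $i=0$ is vacuous and $i=1$ is degenerate), identify $\Gamma_{i-1} \cap N(v)$ and $N(w) \cap \Gamma_i$ explicitly, and then collapse the intersection by a case analysis on which coordinate of $u$ differs from $v$.

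First I would observe that a neighbour of $v$ obtained by changing coordinate $k$ lies in $\Gamma_{i-1}$ if and only if $k \in I$ and the new value equals $x_k$; hence $\Gamma_{i-1} \cap N(v) = \{w_j : j \in I\}$, where $w_j$ agrees with $v$ off coordinate $j$ and has $(w_j)_j = x_j$. Next, for each $j \in I$ I would describe $N(w_j) \cap \Gamma_i$ by checking which single-coordinate alterations of $w_j$ keep the distance to $x$ equal to $i$; exactly two cases arise, namely
\[
\text{(A)}\ k = j,\ a \in \Omega \setminus \{x_j\}, \qquad \text{(B)}\ k \in [n] \setminus I,\ a \in \Omega \setminus \{x_k\}.
\]
A type-(A) neighbour has the form $(v_1,\dots,v_{j-1},a,v_{j+1},\dots,v_n)$ (so $a = v_j$ gives $v$ itself), while a type-(B) neighbour agrees with $v$ on $I \setminus \{j\}$, equals $x_j$ at coordinate $j$, and differs from $x$ at the single coordinate $k \notin I$.

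Now take $u$ in the intersection and assume $u \neq v$; then for every $j \in I$, $u$ is of type (A) or (B) relative to $w_j$. If $u$ is of type (A) for some $j_1 \in I$, pick any $j_2 \in I \setminus \{j_1\}$: whichever type $u$ has at $j_2$, the corresponding description forces $u_{j_1} = v_{j_1}$ (type (A) at $j_2$ says $u$ equals $v$ off coordinate $j_2$; type (B) at $j_2$ says $u$ equals $w_{j_2}$ off some $k \in [n] \setminus I$, and $w_{j_2}$ agrees with $v$ at $j_1$). Combined with the type-(A) description at $j_1$, this yields $u = v$, a contradiction. Hence $u$ must be of type (B) for every $j \in I$; but then $u_j = x_j$ and $u_\ell = v_\ell$ for $\ell \in I \setminus \{j\}$, so letting $j$ range over $I$ we obtain $v_\ell = x_\ell$ for every $\ell \in I$, contradicting the very definition of $I$. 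Therefore $u = v$.

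The main thing to be careful about is the coordinate bookkeeping in the case analysis of the last paragraph; the classification of $N(w_j) \cap \Gamma_i$ into types (A) and (B) is the technical heart, and everything else is forced. The hypothesis $m \geq 3$ is not strictly required for the intersection to collapse in this argument (the type-(A) family merely shrinks to $\{v\}$ when $m = 2$), but it is of course used elsewhere in the paper.
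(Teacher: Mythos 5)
Your proof is correct and follows essentially the same route as the paper's: identify the vertices $w_j\in\Gamma_{i-1}\cap N(v)$ obtained by reverting one differing coordinate of $v$ to its value in $x$, classify $N(w_j)\cap\Gamma_i$ into the same two types (change the reverted coordinate again, or disturb a coordinate where $v$ agrees with $x$), and show that no $u\neq v$ can be adjacent to every $w_j$. Your explicit caveat that $i\geq 2$ is required (for $i=1$ the intersection is all of $N(x)$, so the statement as written fails) and your remark that $m\geq 3$ is not actually needed for this lemma are both accurate refinements of points the paper leaves implicit.
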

\begin{proof}
It is obvious that
 \begin{center}
$v\in \displaystyle \bigcap _{w\in \Gamma_{i-1}\cap N(v)}(N(w)\cap \Gamma _{i}) $.
\end{center}
Let $x=(x_{1},\cdots ,x_{n})$.  Since the Hamming graph $H(n,m)$  is a  distance-transitive graph [3], then we can assume that, $v=(x_{1},\cdots ,x_{n-i},y_{n-i+1},\cdots ,y_{n})$, where $y_{j}\in \mathbb{Z}_{m}-\{x_{j}\}$ for all $j=n-i+1,\cdots ,n$.

Let $w\in \Gamma_{i-1}\cap N(v)$. Then $w,x$ differ in exactly $i-1$ coordinates and $w,v$ differ in exactly one coordinate.  Note that,  if in  $v$   we change one of $x_{j}$s,  where $j=1,\cdots, n-i$, then we obtain a vertex $u$ such that  $d(u,x)\geq i+1$. Thus,  $w$ has a form such as;
\begin{center}
$w=w_{r}=(x_{1},\cdots ,x_{n-i},y_{n-i+1},\cdots,y_{r-1},x_{r},y_{r+1},\cdots ,y_{n})$

\end{center}
 We show that if $u\in \Gamma_{i}$ and $u\neq v$ and $u$ is adjacent to some $w_{r}$, then there is some $w_{p}$ such that $u$ is not adjacent to $w_{p}. $\\
If $v\neq u\in \Gamma_{i}$ is adjacent to $w_{r}$ then $u$ has one of the following forms;\newline
\item[(i)] $u_{1}=(x_{1},\cdots ,x_{n-i}, y_{n-i+1},\cdots ,y_{r-1},y,y_{r+1},\cdots ,y_{n})$, where $y \in \mathbb{Z}_{m} $, and  $y\neq y_{r},x_{r}$ (note that since $m \geq 3$, hence there is  such a  $ y$). \newline
\item[(ii)] $u_{2}=(x_{1}\cdots,x_{j-1},y,x_{j+1},\cdots,x_{n-i},y_{n-i+1},\cdots,y_{r-1},x_{r},y_{r+1},\cdots,y_{n})$,\newline  where $y \in \mathbb{Z}_{m} $, and $y\neq x_{j}$.\newline
In the case (i), $u_{1}$ is not adjacent to $w_{t}$, for all possible $t$,  $t\neq r$.\\
In the case (ii), it is obvious that $u_{2}$ is also  not adjacent to $w_{t}$ for all possible $t$, $t\neq r$.\\
Our argument shows that if $ u\in \Gamma_{i}$, and $u\neq v$,  then there is some $w_{r}$ such that $u$ is not adjacent to $w_{r}$, in other words $u \notin N(w_r)$. Thus we have;
\begin{center}

$\displaystyle \bigcap _{w\in N(v)\cap \Gamma_{i-1}}(N(w)\cap \Gamma _{i})=\{v\} $.
\end{center}

\end{proof}
Let $I=\{\gamma_1, ... ,\gamma_{n}\}$ be a set  and $K$ be a group.  Let $Fun(I, K)$  be the set of all functions from $I$ into $K$. We can turn  $Fun(I, K)$ into a group by defining a product:
$$(fg)(\gamma)=f(\gamma)g(\gamma), \ \  f, g\in Fun(I, K), \ \   \gamma\in I,$$
where the product on the right of the equation  is in $K$. Since $I$ is finite, the group  $Fun(I, K)$ is isomorphic to $K^{n}$ (the direct product of $n$ copies of $K$),  by the isomorphism $f\mapsto (f(\gamma_1), ... , f(\gamma_{n}))$. Let $H$   be a group and  assume  that  $H$ acts on the nonempty set $I$. Then,  the wreath product of  $K$ by $H$ with respect to this action is  the semidirect product $Fun(I, K)\rtimes H$   where $H$ acts on the group $Fun(I, K)$,  by the following rule,
 $$ f^{x}(\gamma)=f(\gamma^{ x^{-1}}),\    f\in Fun(I, K),  \gamma \in I,    \, x \in  H.$$
 We denote this group by $K wr_{I}H$. Consider the wreath product $G=K wr_{I}H$. If $K$ acts on a set $\Delta$ then
 we can define an action of $G$ on $\Delta\times I$ by the following rule,
 $$(\delta, \gamma)^{(f, h)}=(\delta^{f(\gamma)}, \gamma^h), \ \ (\delta, \gamma)\in \Delta\times I,$$
  where $(f, h)\in Fun(I, K)\rtimes H=K wr_{I}H$. It is clear that if $I, K$ and H, are finite sets, then  $G=K wr_{I}H$, is a finite group, and we have $|G|= {|K|}^{|I|} |H|$. \\

We have the following theorem [4].

\begin{thm}
Let $\Gamma $ be a graph with $n$ connected components  $\Gamma_{1},\Gamma_{2},\cdots,\Gamma_{n}$, where $\Gamma_{i}$ is isomorphic to $\Gamma_1$ for all $i \in [n]= \{1,\cdots,n \}=I$. Then  we have,  $Aut(\Gamma)=Aut(\Gamma_1)wr_{I}\mbox{Sym}([n])$.
\end{thm}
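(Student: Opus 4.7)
The plan is to construct an explicit isomorphism from $Aut(\Gamma_1)wr_{I}\mbox{Sym}([n])$ onto $Aut(\Gamma)$. To set up coordinates, I would first fix, for each $i\in I$, a graph isomorphism $\phi_i:\Gamma_1\to\Gamma_i$ (with $\phi_1$ the identity), and use these to identify $V(\Gamma)$ with $V(\Gamma_1)\times I$ via $(\delta,i)\mapsto\phi_i(\delta)$. Under this identification the edges of $\Gamma$ are precisely the pairs $\{(\delta,i),(\delta',i)\}$ with the same second coordinate and $\delta$ adjacent to $\delta'$ in $\Gamma_1$, because there are no edges between distinct components.

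Next I would define a map $\Phi:Aut(\Gamma_1)wr_{I}\mbox{Sym}([n])\to Aut(\Gamma)$ by sending $(f,h)$ to the permutation of $V(\Gamma_1)\times I$ given by the wreath-product action $(\delta,i)^{(f,h)}=(\delta^{f(i)},i^{h})$, which is already displayed in the paper's setup. Since each $f(i)$ is an automorphism of $\Gamma_1$ and since edges of $\Gamma$ respect the second coordinate, $\Phi(f,h)$ is visibly a graph automorphism. Verifying that $\Phi$ is a group homomorphism is a direct calculation using the paper's semidirect-product multiplication together with the action $f^{x}(i)=f(i^{x^{-1}})$; the inverse twist in the latter is precisely what makes the identity $\Phi(f,h)\,\Phi(f_1,h_1)=\Phi\bigl((f,h)(f_1,h_1)\bigr)$ work out.

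Injectivity of $\Phi$ is immediate: if $\Phi(f,h)$ is the identity, then reading second coordinates forces $h=e$, and then reading first coordinates within each fixed component forces $f(i)=1$ for every $i$. For surjectivity, pick any $\alpha\in Aut(\Gamma)$. Because $\alpha$ preserves connectedness, it permutes the components, yielding a unique $h\in\mbox{Sym}([n])$ with $\alpha(V(\Gamma_i))=V(\Gamma_{i^{h}})$. The restriction $\alpha|_{\Gamma_i}$ is then a graph isomorphism $\Gamma_i\to\Gamma_{i^{h}}$, and setting $f(i)=\phi_{i^{h}}^{-1}\circ\alpha|_{\Gamma_i}\circ\phi_i\in Aut(\Gamma_1)$ defines the required element of $Fun(I,Aut(\Gamma_1))$; a coordinate chase then confirms $\Phi(f,h)=\alpha$.

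The part that will require the most care is the bookkeeping of conventions: matching the paper's multiplication rule and the $i^{x^{-1}}$ twist in the definition of the wreath action with the natural decomposition of $\alpha$ into a permutation of components plus component-by-component isomorphisms, so that $\Phi$ really is a homomorphism rather than an anti-homomorphism. Once that is straightened out the rest is routine; if desired one can even short-circuit the explicit verification of surjectivity by the cardinality count $|Aut(\Gamma)|=|Aut(\Gamma_1)|^{n}\cdot n!=|Aut(\Gamma_1)wr_{I}\mbox{Sym}([n])|$, combined with the easy injectivity of $\Phi$.
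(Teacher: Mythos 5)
Your proof is correct, but note that the paper itself offers no proof of this statement at all: it is quoted from the literature (reference [4], Cameron's chapter on automorphisms of graphs), so there is nothing in the paper to compare your argument against. What you give is the standard direct argument --- fix isomorphisms $\phi_i:\Gamma_1\to\Gamma_i$ to identify $V(\Gamma)$ with $V(\Gamma_1)\times I$, check that the wreath-product action lands in $Aut(\Gamma)$, and obtain surjectivity by decomposing an arbitrary automorphism into the permutation it induces on components together with the conjugated restrictions $f(i)=\phi_{i^{h}}^{-1}\circ\alpha|_{\Gamma_i}\circ\phi_i$ --- and all the steps go through; the convention-matching you flag is indeed the only delicate point, and it works out with the paper's twist $f^{x}(\gamma)=f(\gamma^{x^{-1}})$. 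One small caution: the cardinality ``short-circuit'' you mention at the end is circular if used in place of the surjectivity argument, since $|Aut(\Gamma)|=|Aut(\Gamma_1)|^{n}\,n!$ is exactly what is being proved; it is harmless here only because you also give the explicit surjectivity verification.
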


\begin{lem}
Let  $n \geq 2,\  m \geq 3$. Let $v$  be a vertex of the Hamming graph $H(n,m)$.  Then,  $\Gamma_{1} =< N(v) >$, the induced subgraph of $N(v)$ in $H(n,m)$, is isomorphic with $nK_{m-1}$, where $nK_{m-1}$ is the disjoint union of $n$ copies of the  complete graph $K_{m-1}$.
\end{lem}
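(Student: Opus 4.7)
The plan is to exhibit an explicit partition of $N(v)$ into $n$ parts, each of which induces a complete graph $K_{m-1}$, and to verify that no edges run between distinct parts.

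First I would write $v=(v_1,\ldots,v_n)$ and, for each $i\in[n]$, define
\[
N_i(v)=\{u\in V(\Gamma)\mid u_j=v_j\text{ for }j\ne i,\ u_i\ne v_i\}.
\]
By the definition of the Hamming graph, $u\in N(v)$ if and only if $u$ differs from $v$ in exactly one coordinate, so $N(v)$ is the disjoint union $N_1(v)\sqcup\cdots\sqcup N_n(v)$, and $|N_i(v)|=m-1$ since $\Omega$ has $m$ elements.

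Next I would check the two types of pairs. For $u,u'\in N_i(v)$ with $u\ne u'$, both agree with $v$ on every coordinate except $i$; hence $u$ and $u'$ differ only in coordinate $i$, so they are adjacent in $\Gamma$. Thus the induced subgraph on $N_i(v)$ is the complete graph $K_{m-1}$. For $u\in N_i(v)$ and $u'\in N_j(v)$ with $i\ne j$, $u$ differs from $v$ only in coordinate $i$ while $u'$ differs from $v$ only in coordinate $j$; since $u_j=v_j\ne u'_j$ and $u_i\ne v_i=u'_i$, the vertices $u$ and $u'$ differ in (at least) the two coordinates $i$ and $j$, so they are not adjacent.

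Combining these observations, the induced subgraph $\langle N(v)\rangle$ decomposes as the disjoint union of the $n$ cliques $\langle N_1(v)\rangle,\ldots,\langle N_n(v)\rangle$, each isomorphic to $K_{m-1}$, giving $\Gamma_1\cong nK_{m-1}$ as claimed. There is no real obstacle here; the only thing to be careful about is book-keeping of the coordinates in which two neighbors of $v$ can disagree, and the hypothesis $m\geq 3$ is used only to ensure that each clique $K_{m-1}$ is nontrivial.
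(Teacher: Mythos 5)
Your proof is correct and follows essentially the same route as the paper: both arguments partition $N(v)$ according to the unique coordinate in which a neighbor differs from $v$, observe that each of the $n$ parts has $m-1$ vertices inducing a clique, and check that vertices from distinct parts differ in two coordinates and hence are non-adjacent. No further comment is needed.
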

\begin{proof}
Let $v=(v_{1},\cdots,v_{n})$. Then,  for all $i$,  $i=1,\cdots,n$,  there are $m-1$ elements $w_{j}$,  $w_{j}\in \mathbb{Z}_{m}-\{v_{i}\}$. Let $x_{ij}=(v_{1},\cdots,v_{i-1},w_{j},v_{i+1},\cdots,v_{n})$, $1\leq i\leq n, 1\leq j\leq n-1$. Then,  $N(v)=\{x_{ij}:\;\; 1\leq i\leq n,\; 1\leq j\leq m-1\}$. Let $x_{ij},x_{rs}$,  be two vertices in $\Gamma_{1}=<N(v)>$, then $x_{ij}, x_{rs}$ are adjacent in $\Gamma_{1}$ if and only if $i=r$. Note that two vertices,  $(v_{1}, ..., v_{i-1}, w_{j} , v_{i+1}, ..., v_{n})$ and $(v_{1}, ..., v_{i-1}, w_{s} , v_{i+1}, ..., v_{n})$ differ in only one coordinate.
Therefore,  for each $i=1,\cdots ,n$, there are $m-2$ vertices  $w_{ir}$ in $ \Gamma_{1} $ which are adjacent  to the vertex  $w_{ij}$,  where  $r\neq j$. Now, it is obvious that the  subgraph induced by the set $ \{ x_{ij}: 1 \leq j \leq m-1  \} $, is isomorphic with $K_{m-1}$, the complete graph of order $m-1$.  Now, it is easy to see that, the subgraph induced by the set  $ \{x_{ij}:\  i=1,\cdots,n,\  j=1,\cdots,m-1\}$, is isomorphic with $ nK_{m-1}$, the disjoint union of $n$ copies of the  complete graph $K_{m-1}$.

\end{proof}
We now are  ready to prove the main result of this paper.
\begin{thm}
Let $n \geq 2,\  m \geq 3$,  and  $\Gamma=H(n,m)$  be a  Hamming graph. Then Aut$(\Gamma)\cong \mbox{Sym}([n])wr_I \mbox{Sym}([m]) $, where $I= [n]= \{1,2,\cdots n \}$.
\end{thm}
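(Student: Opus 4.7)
The plan is to run a counting argument via orbit--stabilizer, using Lemmas 3.1 and 3.2 together with Theorem 3.2 to pin down the vertex stabilizer. First I would verify that the subgroup $W=\langle H,F\rangle\leq Aut(\Gamma)$ really is isomorphic to $\mbox{Sym}([m])\,wr_I\,\mbox{Sym}([n])$ of order $(m!)^n\,n!$: $F$ is a copy of $Fun([n],\mbox{Sym}([m]))\cong \mbox{Sym}([m])^n$, $H\cong \mbox{Sym}([n])$ normalizes $F$ with exactly the coordinate-permutation action used to define the wreath product, and $F\cap H=\{1\}$. I would also record that $F$ alone acts transitively on $V(\Gamma)$: to send $(x_1,\ldots,x_n)$ to $(y_1,\ldots,y_n)$, pick $\sigma_i\in\mbox{Sym}([m])$ with $\sigma_i(x_i)=y_i$ and take $f(i)=\sigma_i$. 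In particular $Aut(\Gamma)$ is transitive on the $m^n$ vertices of $\Gamma$.

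By orbit--stabilizer it then suffices to prove $|Aut(\Gamma)_v|\leq ((m-1)!)^n\,n!$ for some (equivalently, every) vertex $v$, since combined with $W\leq Aut(\Gamma)$ this forces
\[
|Aut(\Gamma)|\leq m^n\cdot((m-1)!)^n\,n!=(m!)^n\,n!=|W|,
\]
and hence $Aut(\Gamma)=W$. To bound the stabilizer I would introduce the restriction-to-neighborhood homomorphism
\[
\rho:Aut(\Gamma)_v\longrightarrow Aut(\langle N(v)\rangle),
\]
observing that $\rho(f)$ is well defined because each $f\in Aut(\Gamma)_v$ permutes $N(v)$ among itself and preserves adjacency. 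By Lemma 3.2, $\langle N(v)\rangle\cong nK_{m-1}$, so by Theorem 3.2 the codomain has order exactly $((m-1)!)^n\,n!$. Everything therefore reduces to showing that $\rho$ is injective.

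The injectivity is the heart of the argument and is where Lemma 3.1 enters. Take $f\in Aut(\Gamma)_v$ fixing $N(v)=\Gamma_1(v)$ pointwise. Since $f$ fixes $v$ and preserves graph distance, $f$ stabilizes each distance class $\Gamma_i(v)$ setwise. I would then show by induction on $i\geq 1$ that $f$ fixes $\Gamma_i(v)$ pointwise, the base case $i=1$ being the hypothesis. For the inductive step, given $u\in\Gamma_i(v)$, the set $S=\Gamma_{i-1}(v)\cap N(u)$ is fixed pointwise by $f$ (by the inductive hypothesis), so $\Gamma_{i-1}(v)\cap N(f(u))=f(S)=S$. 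Lemma 3.1 applied with base vertex $v$ now says that both $u$ and $f(u)$ equal the unique element of $\bigcap_{w\in S}(N(w)\cap \Gamma_i(v))$, hence $f(u)=u$. This closes the induction, and since $V(\Gamma)=\bigcup_{i=0}^n \Gamma_i(v)$, we conclude $f=id$.

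The main obstacle, and the only nontrivial content, is precisely this injectivity: a priori an element of $Aut(\Gamma)_v$ could act trivially on $N(v)$ but nontrivially farther from $v$, which would break the order estimate. Lemma 3.1 is exactly the rigidity statement that rules this out, and the hypothesis $m\geq 3$ is used there to produce the ``distinguishing'' vertex $u_1$ with entry $y\neq y_r,x_r$. Once injectivity is secured, the proof collapses to the counting inequality above and to the identification of $W$ with the wreath product carried out in the first paragraph.
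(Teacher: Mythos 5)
Your proposal is correct and follows essentially the same route as the paper: bound the stabilizer by restricting to $\langle N(v)\rangle\cong nK_{m-1}$, prove injectivity of that restriction by induction on distance using Lemma 3.1, apply Theorem 3.2 to get $|\mbox{Aut}(\langle N(v)\rangle)|=((m-1)!)^n n!$, and combine with vertex-transitivity and the subgroup $W=\langle H,F\rangle$ of order $(m!)^n n!$ to force equality. The only (welcome) additions are that you explicitly verify $W\cong \mbox{Sym}([m])\,wr_I\,\mbox{Sym}([n])$ and that $F$ alone is transitive, points the paper asserts more briefly.
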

\begin{proof}
Let $G = \mbox{Aut}(\Gamma)$. Let $x \in V = V (\Gamma)$, and $G_{x} = \{f \in G\; |\;f(x) = x\}$ be the stabilizer
subgroup of the vertex $x$ in Aut$(\Gamma)$. Let $< N(x) >= \Gamma_{1}$ be the induced subgroup of $N(x)$ in $\Gamma$. If $f\in G_{x}$ then $f_{|N(x)}$, the restriction of $f$ to $N(x)$ is an automorphism of the graph  $\Gamma_{1}$. We
define the mapping $\psi : G_{x}\longrightarrow \mbox{Aut}(\Gamma_{1})$ by this rule,  $\psi(f) = f_{|N(x)}$.  It is an easy task to show that $\psi$ is
a group homomorphism. We show that $Ker(\psi)$  is the identity group. If $f \in Ker(\psi)$, then $f(x) = x$ and $f(w) = w$ for every $w \in N(x)$.   Let $\Gamma_{i}$ be the set of vertices of $\Gamma$  which are at distance $i$ from the vertex $x$. Since,  the diameter of the graph $\Gamma = H(n,m)$, is $n$,   then $V = V (\Gamma) = \displaystyle \bigcup_{i=0}^{n}\Gamma_{i}$.
We prove by induction on $i$, that $f(u) = u$ for every $u \in \Gamma_{i}$. Let $d(u, x)$ be the distance of the vertex $u$ from $x$. If $d(u, x) = 1$, then $u \in \Gamma_{1}$ and we have $f(u) = u$. Assume that $f(u) = u$,
when $d(u, x) = i-1$. If $d(u, x) = i$, then by   Lemma 1.    $\{u\} =\displaystyle \bigcap_{w\in \Gamma_{i-1}\cap N(u)}(N(w)\cap \Gamma_{i})$.
Note that if $w\in \Gamma_{i-1}$,  then $d(w, x) = i - 1$, and hence  $f(w) = w$. Therefore,
\begin{center}
$\{f(u)\} = \displaystyle \bigcap_{w\in \Gamma_{i-1}\cap N(u)}(N(f(w))\cap \Gamma_{i})=\displaystyle \bigcap_{w\in \Gamma_{i-1}\cap N(u)}(N(w)\cap \Gamma_{i})= u$.
\end{center}
Thus,  $f(u)=u$ for all $u\in V(\Gamma)$,  hence  we have $Ker(\psi)=\{1\} $. On the other hand,
\begin{center}
$\frac{G_{v}}{Ker(\psi)}\cong \psi(G_{v})\leq \mbox{Aut}(\Gamma_{1}) $, hence  $G_{v} \cong \psi (G_{v})\leq \mbox{Aut}(\Gamma_{1})$.
\end{center}
Thus,  $|G_{v}|\leq | \mbox{Aut}(\Gamma_{1})|$.\\
 We know by Lemma 3.  that $\Gamma_{1}\cong nK_{m-1}$. We know that,  Aut$(K_{m-1})\cong \mbox{Sym}([m-1]) $.   Then,  by the above equation,  we have;
\begin{center}
$|G_{v}|\leq |\mbox{Aut}(\Gamma_{1})|=|\mbox{Sym}([m-1])wr_{I}\mbox{Sym}([n])|=((m-1)!)^{n}n!$,
\end{center}
 where
 $I= [n]=\{1, \cdots,  n  \}$.

Since $\Gamma = H(n,m)$ is a vertex-transitive graph, then  we have $|V (\Gamma)| = |G|
|G_{v}|$, and therefore;  $$|G| =|G_{v}||V (\Gamma)| \leq |\mbox{Aut}(nK_{m-1})|m^{n}=m^{n}((m-1)!)^{n}n!=(m!)^{n}n! \ \ \ \ \ (*)$$ \
We have seen (in the introduction section of this paper) that if $\theta \in \mbox{Sym}([n]$), where $ \Omega=[n]=\{1,\cdots,  n  \} $,   then
$$ f_\theta : V( H(n,m) )\longrightarrow V( H(n,m)),
 f_\theta (x_1, ..., x_n ) =   (x_{\theta (1)}, ..., x_{\theta (n)}),  $$
 is an automorphism of the Hamming graph $ H(n,m),  $ and the mapping  \
  $ \psi : \mbox{Sym} ([n]) \longrightarrow Aut ( H(n,m) )$, defined by
 this  rule,  $ \psi ( \theta ) = f_\theta $,  is an injection. Therefore, the set  $H= \{ f_\theta \  |\  \theta \in \mbox{Sym}([n]) \} $,  is a subgroup of $ Aut (( H(n,m) )) $,  which is isomorphic with $\mbox{Sym}([n])$.  Hence,  we have  $\mbox{Sym}([n]) \leq Aut ( H(n,m) )$. \

 On the other hand,  if $f \in Fun([n], \mbox{Sym}([m]))$, then we define the mapping;  $A_f: V(\Gamma)\rightarrow V(\Gamma),$    by    this   rule,  \newline   $  A_f(x_1, \cdots ,x_n)$=$(f(1)(x_1), \cdots,  f(n)(x_n)).$    It is an easy task  to show that the mapping
 $A_f$ is an automorphism of the Hamming graph $\Gamma$, and hence the group,  $F=\{ A_f \ |  \ f\in  Fun([n], ([m])) \}$, is a subgroup
 of the Hamming graph $\Gamma =H(n,m)$.  Therefore, the subgroup which is  generated by $H$ and $F$   is  in the group $Aut(\Gamma)$, namely, $W=<H,F>$    is a subgroup of $Aut(\Gamma)$. Note that $W= \mbox{Sym}([m]) wr_I \mbox{Sym}([n])$, where $I=[n]=\{1,2,\cdots,n  \}$. Since, the subgroup $W$ has  $(m!)^{n}n!$ elements, then by $(*)$, we conclude that;  $$Aut(\Gamma)=W= \mbox{Sym}([m]) wr_I \mbox{Sym}([n])$$

\end{proof}


\end{document}